\documentclass[11pt]{article}
\usepackage{amsmath,amsthm,amssymb,amscd}
\usepackage{graphicx}
\usepackage{graphics}
\usepackage{verbatim}

\usepackage{mathrsfs}
\usepackage{color}
\usepackage[top=1in, bottom=1in, left=1.25in, right=1.25in]{geometry}
\usepackage{enumerate}
\RequirePackage{doi}

\setcounter{secnumdepth}{2}

\newtheorem{theorem}{Theorem}[section]

\newtheorem{observation}[theorem]{Observation}

\newtheorem{lemma}[theorem]{Lemma}

\newenvironment{claim}[1]{\par\noindent{Claim}\space#1}{}
\newenvironment{claimproof}[1]{\par\noindent{Proof:}\space#1}{$\blacksquare$}

\usepackage{thmtools}
\usepackage{thm-restate}

\usepackage{titlesec}

\title{Bounds for the chromatic index of signed multigraphs}

\author{Eckhard Steffen and Isaak H.~Wolf
	 \thanks{Funded by the Deutsche Forschungsgemeinschaft (DFG, German Research Foundation) – 445863039 \newline
Paderborn University,  Department of Mathematics, Warburger Strasse 100, 33098 Paderborn, Germany. Email:~es@upb.de and isaak.wolf@upb.de}}

\begin{document}
\date{}
\maketitle

\begin{abstract}
The paper studies edge-coloring of signed multigraphs and extends classical
Theorems of Shannon \cite{shannon1949theorem} and 
König \cite{konig1916graphen} to signed multigraphs.
	
We prove that the chromatic index of a signed multigraph $(G,\sigma_G)$ is at most 
$\lfloor \frac{3}{2} \Delta(G) \rfloor$. Furthermore,
the chromatic index of a 
balanced signed multigraph $(H,\sigma_H)$ is at most $\Delta(H)  + 1$
and the balanced signed multigraphs with chromatic index
$\Delta(H)$ are characterized. 
\end{abstract}

\section{Introduction}

Edge-coloring of graphs is a classical topic in graph theory and early
results for upper bound of the chromatic index of graphs are
proved by K\"onig and Shannon. K\"onig \cite{konig1916graphen} proved
that the chromatic index of a bipartite graph is equal to
its maximum vertex degree $\Delta$, and Shannon \cite{shannon1949theorem}
proved that it is bounded by $\lfloor \frac{3}{2} \Delta \rfloor$ for arbitrary 
graphs.
 
There are several notions of (vertex-) coloring of signed graphs,
which had been discussed intensively, see \cite{SV_survey} for a survey. 
Recently, a very natural notion of edge-coloring signed graphs had been introduced in 
\cite{behr2020edge, zhang2020edge}. This notion generalizes edge-coloring of graphs. Indeed, for all-negative signed graphs it coincides with edge-coloring
(unsigned) graphs. Motivated by the aforementioned 
results of K\"onig and Shannon, we prove corresponding bounds for signed graphs. 
We show that the chromatic index of a balanced signed graph is at most $\Delta + 1$,
and it is at most $\Delta$, if the graph has a matching, whose removal results in
a graph with even maximum degree (Theorem \ref{thm: Koenig}). We further prove that
Shannon's bound also applies to signed graphs (Theorem \ref{thm: Shannon}).

\section{Definitions and basic results}

For simplicity, we use the term graph instead of multigraph in this paper.
Thus, graphs may contain parallel edges and loops. The vertex set of a graph $G$ 
is denoted by $V(G)$ and its edge set by $E(G)$.
Let $X \subseteq V(G)$ or $X \subseteq E(G)$, the graph induced
by $X$ is denoted by $G[X]$. 

Let $e$ be an edge that is incident to vertices $v$ and $w$. If there is no harm of confusion, then we will also use the term $vw$ to denote $e$.   
Edge $vw$ consists of two half-edges one of which is incident to $v$ and the
other to $w$. The half-edges are denoted by $h_e(v)$ and $h_e(w)$. If $v=w$, then $e$ is a loop. The set of half-edges of $G$ is denoted by $H(G)$
and $H_G(v)$ denotes the set of half-edges that are incident to $v$.
The cardinality of $H_G(v)$ is the degree of $v$ and is denoted by $d_G(v)$. 
Furthermore, $\Delta(G) = \max\{d_G(v) \colon v \in V(G)\}$ is the maximum degree of $G$. The set of vertices of maximum degree in $G$ is denoted by $V_{\Delta(G)}$.
A circuit is a connected 2-regular graph. 

For the definition of an edge-coloring of signed graphs
we will use a natural correspondence between bidirectional graphs and signed graphs. 
Let $G$ be a graph and $\tau_G: H(G) \rightarrow  \{ \pm 1 \}$ be a function. 
This function defines a bidirection of the edges of $G$. If $e=vw$ is an edge, then 
half-edge $h_e(v)$ is oriented away from $v$ if $\tau_G(h_e(v))=1$, and
towards $v$ if $\tau_G(h_e(v))=-1$. Edge $e$ is introverted if $\tau_G(h_e(v)) = \tau_G(h_e(w)) = 1$ and extroverted if $\tau_G(h_e(v)) = \tau_G(h_e(w)) = -1$.

Let $G$ be a graph with bidirection $\tau$. The signed graph $(G,\sigma)$ 
is the graph $G$ together with a function $\sigma : E(G) \rightarrow \{ \pm 1 \}$ with $\sigma(e) = -(\tau(h_e(v))\tau(h_e(w)))$ for each edge $e=vw$.
The function $\sigma$ is called a signature of $G$.
An edge $e$ is negative if $\sigma(e) = -1$, otherwise it is positive.   
The set of negative edges of $(G,\sigma)$ is denoted by $N_{\sigma}$. The set 
$E(G) - N_{\sigma}$ is the set of positive edges. 
By definition, $N_{\sigma}$ consists of those edges which are introverted or extroverted in the bidirected graph.

A circuit is positive if the product of the signs
of its edges is positive, otherwise it is negative. 
A subgraph $(H,\sigma|_{E(H)})$ of $(G,\sigma)$ 
is balanced if all circuits in $(H,\sigma|_{E(H)})$ are positive, otherwise it is unbalanced.
If $\sigma(e)=1$ for all $e \in E(G)$, then $\sigma$ is the all positive signature and it is denoted by $\texttt{\bf 1}$,
and if $\sigma(e)=-1$ for all $e \in E(G)$, then $\sigma$ is the all negative signature and it is denoted by $\texttt{\bf -1}$.

A resigning of a signed graph $(G,\sigma)$ at a vertex $v$ defines a signed graph $(G,\sigma')$ with 
$\tau_G'(h) = -\tau_G(h)$ if $h \in H_G(v)$, $\tau_G'(h) = \tau_G(h)$ otherwise, and 
$\sigma'(e) = -(\tau_G'(h_e(u))\tau_G'(h_e(w)))$ for each edge $e=uw$. Note that resigning at $v$
changes the sign of each edge incident to $v$ but the sign of a loop incident to $v$ is unchanged. Thus, resigning does not change the parity of the number of negative edges in an eulerian subgraph of a signed graph.
Resigning defines an equivalence relation on the set of signatures 
of a graph $G$. We 
say that $(G,\sigma_1)$ and $(G,\sigma_2)$ are equivalent if they can be obtained from each other by a sequence 
of resignings. We also say that $\sigma_1$ and $\sigma_2$ are equivalent signatures of $G$.
In \cite{SignedGraphs} it is proved that two signed graphs are equivalent if and only if they have the same set of negative circuits.

We frequently will use the following well known fact. Let $(G,\sigma)$ be a signed graph and
$S \subseteq E(G)$. If $G[S]$ is a forest, then there is a signature $\sigma'$ that
is equivalent to $\sigma$ and $S \subseteq N_{\sigma'}$. 

It suffices to consider edge-cuts for resigning since  
resigning at every vertex of a set $X \subseteq V(G)$ changes the sign of every edge 
of the edge-cut $\partial_G(X)$ and the signs
of all other edges are unchanged. Thus, Harary's characterization of balanced signed graphs \cite{Balance} has the following consequence.

\begin{observation} \label{Obs: Characterization Balance}
A signed graph $(G,\sigma)$ is balanced if and only if it is equivalent to $(G,\texttt{\bf 1})$. 
\end{observation}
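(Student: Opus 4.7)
The plan is to prove the two directions separately, using Harary's characterization (cited in the excerpt) for one direction and the preservation of negative circuits under resigning for the other.

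For the ``if'' direction I would argue: suppose $(G,\sigma)$ is equivalent to $(G,\mathbf{1})$. The excerpt already notes that equivalent signatures produce the same set of negative circuits. Since $(G,\mathbf{1})$ has no negative edges at all, it has no negative circuits; hence $(G,\sigma)$ has no negative circuits either, which is exactly the definition of being balanced.

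For the ``only if'' direction I would invoke Harary's theorem, which (in the form relevant here) says that $(G,\sigma)$ is balanced if and only if there is a vertex partition $V(G) = X \cup (V(G)\setminus X)$ such that $N_\sigma = \partial_G(X)$, i.e.\ the negative edges form an edge-cut. Assuming $(G,\sigma)$ is balanced, pick such a set $X$. The excerpt explicitly records that resigning at every vertex of $X$ flips exactly the signs of the edges in $\partial_G(X)$ and leaves every other edge unchanged. Applying this operation to $\sigma$ therefore flips precisely the edges in $N_\sigma$, turning every edge positive and producing the signature $\mathbf{1}$. Hence $(G,\sigma)$ and $(G,\mathbf{1})$ are equivalent.

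There is no real obstacle here once Harary's theorem is assumed; the only small care is with loops, since resigning never changes the sign of a loop. This is harmless: a loop is itself a circuit, so a negative loop would immediately make $(G,\sigma)$ unbalanced, and the balanced hypothesis rules this out, which means every loop is positive to begin with and the edge-cut description of $N_\sigma$ still works. Putting the two directions together yields the observation.
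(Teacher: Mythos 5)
Your proof is correct and follows essentially the same route the paper takes: it derives the observation from Harary's edge-cut characterization of balance together with the fact that resigning at every vertex of a set $X$ flips exactly the edges of $\partial_G(X)$, plus the invariance of negative circuits under resigning for the converse. The extra remark about loops is a harmless and sensible precaution that the paper leaves implicit.
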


We define a signed graph $(G,\sigma)$ to be antibalanced if it is equivalent to $(G,\texttt{\bf -1})$.
Clearly, $(G,\sigma)$ is antibalanced if and only if the sign product of every even circuit is 1 and it is -1 for every odd circuit.

\subsection{Coloring signed graphs} \label{Subsec: coloring}

Basically we follow the approach of \cite{behr2020edge, zhang2020edge} for 
edge-coloring signed graphs. For simplicity we use the word coloring
instead of edge-coloring since we only consider edge-coloring in this paper. 
For technical reasons we will use elements
of symmetric sets for coloring as it is introduced in \cite{cappello2021symmetric}. 
A set $S$ together with a sign ``$-$'' is a symmetric set 
if it satisfies the following conditions:	
\begin{enumerate}
	\item $s \in S$ if and only if $-s \in S$.
	\item If $s=s'$, then $-s=-s'$.
	\item $s = -(-s)$.
\end{enumerate}

An element $s$ of a symmetric set $S$ is self-inverse if $s=-s$. 
A symmetric set with self-inverse elements $0_1, \dots, 0_t$
and non-self-inverse elements $\pm s_1, \dots, \pm s_k$ is
denoted by $S_{2k}^t$. Clearly, $|S_{2k}^t|=t+2k$. 

Let $(G,\sigma)$ be a signed graph with orientation $\tau$ and 
$t,k$ be two non-negative integers not both equal to $0$. 
A function $c \colon E(G) \rightarrow S^t_{2k}$ is an
$S^t_{2k}$-coloring of $(G,\sigma)$ (with orientation $\tau$)
if
$$ \tau(h_{e_1}(v))c(e_1) \not = \tau(h_{e_2}(v))c(e_2)$$
for every vertex $v \in V(G)$ and any two different edges 
$e_1, e_2 \in E_G(v)$. An example is given in Figure~\ref{fig:K4_coloring}.

\begin{figure}[htbp]
\centering
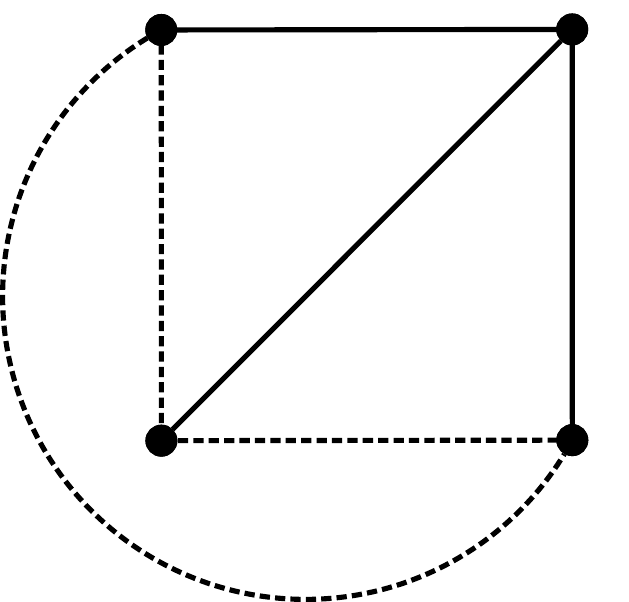
\caption{An example of an $S^1_2$-coloring. Negative edges are drawn as dashed lines.}
\label{fig:K4_coloring}
\end{figure}


By definition, if $(G,\sigma)$ and $(G,\sigma')$ are equivalent, then 
$(G,\sigma)$ admits an $S^t_{2k}$-coloring if and only if
$(G,\sigma')$ admits an $S^t_{2k}$-coloring.
We will often (implicitly) use the following statements.

\begin{observation} \label{obs:color half-edges}
	Let $(G,\sigma)$ be a signed graph with orientation $\tau$. 
	If $(G,\sigma)$ has an $S^t_{2k}$-coloring $c$, then 
	$(G,\sigma)$ has no negative loop, and for each edge 
	$e=vw \colon$ $\tau(h_{e}(v))c(e) = \tau(h_{e}(w)) c(e)$ if and only if $e$ is negative or $c(e)$ is self-inverse.
\end{observation}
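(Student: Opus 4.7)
The plan is to unpack the coloring condition at a vertex $v$ together with the defining relation $\sigma(e)=-\tau(h_e(v))\tau(h_e(w))$ between signature and bidirection, and then translate both assertions into a short sign computation inside the symmetric set $S^t_{2k}$.

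For the non-existence of negative loops I would argue by contradiction. Suppose $e$ is a loop incident to $v$ with half-edges $h_1,h_2 \in H_G(v)$. The relation $\sigma(e)=-\tau(h_1)\tau(h_2)$ shows that $e$ is negative precisely when $\tau(h_1)=\tau(h_2)$, and in that case $\tau(h_1)c(e)=\tau(h_2)c(e)$. This contradicts the coloring requirement that the two distinct half-edges of $v$ attached to $e$ receive different oriented colors. The one subtlety is to spell out that the condition of Subsection~\ref{Subsec: coloring} also applies to the pair of half-edges of a single loop, since each loop contributes two distinct elements to $H_G(v)$ and thereby counts twice in the degree; this is the natural half-edgewise reading of the definition.

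For the equivalence I would fix an edge $e=vw$ and manipulate the equation $\tau(h_e(v))c(e)=\tau(h_e(w))c(e)$ in $S^t_{2k}$ directly. Multiplying both sides by $\tau(h_e(v))\in\{\pm 1\}$ (a valid operation because $s=s'$ iff $-s=-s'$ in a symmetric set) turns the equation into $c(e)=\tau(h_e(v))\tau(h_e(w))\,c(e)=-\sigma(e)\,c(e)$. If $\sigma(e)=-1$, the right-hand side is $c(e)$ and the equation is automatic, so $e$ being negative forces equality. If $\sigma(e)=+1$, the equation reduces to $c(e)=-c(e)$, which holds exactly when $c(e)$ is self-inverse. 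Hence $\tau(h_e(v))c(e)=\tau(h_e(w))c(e)$ fails precisely when $e$ is positive and $c(e)$ is not self-inverse, which is the claimed equivalence.

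I do not anticipate any real obstacle: both assertions are bookkeeping in the symmetric set once the coloring condition is interpreted correctly. The only delicate point in the write-up is the half-edgewise reading of the coloring condition, which is what produces the negative-loop obstruction; once that is fixed, everything else is a one-line manipulation of signs inside $S^t_{2k}$.
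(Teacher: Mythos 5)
Your proposal is correct; the paper states this as an observation without proof, and your sign computation $\tau(h_e(v))c(e)=\tau(h_e(w))c(e) \Leftrightarrow c(e)=-\sigma(e)c(e)$ is exactly the intended justification. You are also right to flag that the coloring condition must be read half-edgewise (distinct half-edges at $v$ receive distinct induced colors) for the negative-loop claim to follow, since the literal statement in Subsection~\ref{Subsec: coloring} only quantifies over pairs of \emph{different} edges; this reading is consistent with the paper's induced half-edge coloring $c_H$.
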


Let $c$ be an $S^t_{2k}$-coloring of $(G,\sigma)$ and $v \in V(G)$. 
We say that color $a$ appears at $v$, if $\tau(h_{e}(v))c(e) = a$
for an edge $e=vw$. Otherwise, we say that $v$ misses $a$. 
Indeed, $c$ induces an $S^t_{2k}$-coloring $c_H$ of
the half-edges of $(G,\sigma)$, where $c_H(h_e(v)) = \tau(h_{e}(v))c(e)$.

For $S^0_{2k}$ and $S^1_{2k}$ our definition coincides with the definition of
an edge-coloring of simple signed graphs given in \cite{behr2020edge, zhang2020edge}.
We will need a slight extension of some basic results which are 
observed by Behr \cite{behr2020edge}.

\begin{lemma} \label{lem: basics}
	 1. A signed path has an $S^0_2$-coloring. \cite{behr2020edge} \\
    2. A signed circuit has an $S^0_2$-coloring if and only if it is positive.  \cite{behr2020edge} \\
	3. A negative signed circuit has an $S^1_{2}$-coloring
		that colors precisely one negative edge with color $0$. 
\end{lemma}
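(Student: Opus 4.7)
The plan is to bootstrap directly from Lemma~\ref{lem: basics}.1 by deleting one well-chosen edge. Since the signed circuit $C$ is negative, the product of its edge signs is $-1$, and hence $C$ contains an odd (in particular positive) number of negative edges; fix any negative edge $e$ of $C$. Removing $e$ leaves a signed path $P$ on the same vertex set, with the orientation $\tau$ inherited from $C$. By Lemma~\ref{lem: basics}.1, $P$ admits an $S^0_2$-coloring $c'$, which by definition uses only the two non-self-inverse colors $\pm s_1$.

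Next, extend $c'$ to a candidate coloring $c \colon E(C) \to S^1_2$ by setting $c(f) = c'(f)$ for every edge $f$ of $P$ and $c(e) = 0$, where $0$ is the self-inverse element of $S^1_2$. The only vertices at which a new constraint has to be verified are the two endpoints $u_1, u_2$ of $e$; at such a vertex $u_i$ the edge $e$ contributes the colored half-edge value $\tau(h_e(u_i)) \cdot 0 = 0$, while the other incident edge $f$ (which lies in $P$) contributes $\tau(h_f(u_i)) \cdot c'(f) \in \{s_1, -s_1\}$. Since $0 \neq \pm s_1$, the required inequality holds. At every internal vertex of $P$, the coloring $c$ coincides with $c'$, so the constraint is inherited.

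Finally, because $c'$ uses no self-inverse color on $P$, the edge $e$ is the only edge of $C$ that receives color $0$, and $e$ is negative by the choice made at the outset. I do not anticipate a genuine obstacle: once one observes that deleting a single edge of a circuit yields a path, the verification reduces to the trivial fact that the self-inverse color $0$ is distinct from $\pm s_1$ regardless of the sign $\tau(h_e(u_i)) \in \{\pm 1\}$, so no case analysis on the local bidirection is needed.
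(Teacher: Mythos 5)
Your proof is correct and follows exactly the paper's argument: choose a negative edge $e$ (which exists since the circuit is negative), apply item 1 to the path $C-e$, and extend by assigning $e$ the self-inverse color $0$, which conflicts with nothing since $0 \neq \pm s_1$. The only difference is that you spell out the verification at the endpoints of $e$, which the paper leaves implicit.
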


\begin{proof} We prove item 3. A negative circuit $(C,\sigma)$ has a negative edge $e$. 
	Then $(C-e,\sigma|_{E(C-e)})$ is a path and
	therefore, it has an $S^0_{2}$-coloring by statement 1. This
	coloring can be extended to a $S^1_{2}$-coloring of $(C,\sigma)$ by coloring $e$ with color $0$. 
\end{proof}

Let $c$ be an $S^t_{2k}$-coloring of $(G,\sigma)$. 
A color class of a self-inverse color is called a self-inverse color class
and a color class of a non-self-inverse color is called non-self-inverse.
By Observation \ref{obs:color half-edges}, a self-inverse color class 
is a matching in $G$ and a non-self-inverse color class $c^{-1}(\pm s)$ 
consist of paths and balanced circuits, by Lemma \ref{lem: basics}.
This is due to the fact that the two sets $c^{-1}(s)$ and $c^{-1}(-s)$ 
cannot not separated from each other in view of the half-edges. 
Note that the induced $S^t_{2k}$coloring $c_H$
of the half-edges may use color $-s$ even for edges of $c^{-1}(s)$. 
This fact allows a simple variant of Kempe-switching in signed graphs. 
Let $D$
be a component of a color class. Let $c'(e) = -c(e)$ if $e \in D$
and $c'(e)= c(e)$ otherwise. Then $c'$ is an $S^t_{2k}$-coloring of $(G,\sigma)$. We say that $c'$ is obtained from $c$ by resigning a color at $D$.

We will also follow the approach of \cite{behr2020edge, zhang2020edge}
for defining the signed chromatic index of a signed graph. 
Coloring an edge with a self-inverse color annuls the sign of the edge. Thus,
it is appropriate to minimize the number of self-inverse elements in the definition
of the chromatic index of a signed graph. Since the number of self-inverse 
elements determines the parity of a symmetric set we need two sets for coloring. 
For $i \in \{0,1\}$ let $\chi'_i(G,\sigma)$ be the minimum $2k + i$ such that 
$(G,\sigma)$ admits an $S^i_{2k}$-coloring. The chromatic index
of $(G,\sigma)$ is $\min\{\chi'_0(G,\sigma), \chi'_1(G,\sigma)\}$ and
it is denoted by $\chi'(G,\sigma)$.  
Note that by 
Observation \ref{obs:color half-edges}, $\chi'(G,\texttt{\bf{-1}}) = \chi'(G)$.

Let $L$ be a spanning subgraph of a graph $G$ with $\Delta(L) \leq 2$.
The edge set of $L$ is called a layer of $G$. The following
result is folklore and can easily be deduced from Petersen's 2-factorization theorem
for even regular graphs \cite{petersen1891theorie}. 

\begin{lemma} \label{lem: layers} 
The edge set of any graph $G$ can be decomposed into $\lceil \frac{\Delta(G)}{2} \rceil$ layers.
\end{lemma}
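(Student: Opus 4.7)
The plan is to reduce to Petersen's $2$-factorization theorem \cite{petersen1891theorie}, which states that every $2k$-regular multigraph decomposes into $k$ edge-disjoint $2$-factors. Set $k := \lceil \Delta(G)/2 \rceil$, so that $2k \geq \Delta(G)$. I would first embed $G$ as an edge-subgraph of a $2k$-regular multigraph $G'$ via the standard doubling construction: take two vertex-disjoint copies $G_1$ and $G_2$ of $G$, identify $G_1$ with $G$, and, for each vertex $v \in V(G)$ with copies $v_1 \in V(G_1)$ and $v_2 \in V(G_2)$, add $2k - d_G(v)$ parallel edges joining $v_1$ and $v_2$. Every vertex of $G'$ then has degree exactly $2k$.

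Applying Petersen's theorem to $G'$ yields a partition $E(G') = E(F_1) \cup \cdots \cup E(F_k)$ into $2$-factors. Setting $L_i := E(F_i) \cap E(G)$ gives a spanning subgraph of $G$ whose edge set is $L_i$ and whose maximum degree is bounded by the degree in $F_i$, hence at most $2$; so each $L_i$ is a layer of $G$. Since the $F_i$'s partition $E(G')$ and $E(G) \subseteq E(G')$, the $L_i$'s partition $E(G)$, which delivers the desired decomposition into $k = \lceil \Delta(G)/2 \rceil$ layers.

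There is essentially no hard step; this is a short application of a classical theorem. The only point that deserves a passing remark is that the paper admits loops, whereas Petersen's theorem is most often stated in the loopless setting. A loop at $v$ contributes $2$ to $d_G(v)$ and appears in the standard Eulerian-circuit proof of Petersen's theorem as a closed walk of length one at $v$, so the $2$-factorization argument carries through verbatim; alternatively, one may pre-subdivide every loop into a digon, apply the loopless version, and then contract the subdivision vertices to recover a layer decomposition of the original $G$.
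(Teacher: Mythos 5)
Your proof is correct and follows exactly the route the paper indicates: the paper gives no written proof of this lemma, remarking only that it is folklore and "can easily be deduced from Petersen's 2-factorization theorem," and your doubling construction plus intersection with $E(G)$ is precisely that standard deduction (including the harmless treatment of loops). No gaps.
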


We also say that $G$ is decomposable into $\lceil \frac{\Delta(G)}{2} \rceil$ layers.

\section{Upper bounds for the chromatic index}

The chromatic index of an antibalanced signed 
graph is equal to the chromatic index of its underlying unsigned 
graph.
Therefore, the case $\sigma = \texttt{\bf{-1}}$ of Theorem \ref{thm: Shannon}
is Shannon's Theorem \cite{shannon1949theorem}.
Theorem \ref{thm: Koenig} extends  
K\"onig's Theorem \cite{konig1916graphen} that the chromatic index
of a bipartite graph is equal to its maximum degree to balanced signed graphs.
 
Our proofs are based on a decomposition of the edge set of $(G,\sigma)$ into signed layers. 
Shannon \cite{shannon1949theorem} used this idea, which he attributed to Foster, 
for the proof of his theorem for graphs with even maximum degree.  
Our proof then also gives a proof by this method for Shannon's theorem 
for graphs with odd maximum degree.

\begin{theorem} \label{thm: Koenig}
	If $(G,\sigma)$ is a balanced signed graph, 
	then $\chi'(G,\sigma) \leq \Delta(G)+1$. In particular,
	$\chi'(G,\sigma) = \Delta(G)$ if and only if 
	$G$ has a (possible empty)  
	matching $M$ such that $\Delta(G-M)$ is even.
	The bound is best possible. 
\end{theorem}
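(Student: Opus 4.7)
Since $(G,\sigma)$ is balanced, Observation~\ref{Obs: Characterization Balance} lets me replace $\sigma$ by the equivalent signature $\texttt{\bf 1}$ without changing $\chi'$. The trivial lower bound $\chi'(G,\sigma)\ge\Delta(G)$ holds because the $\Delta(G)$ directed colors at any $v\in V_{\Delta(G)}$ must be pairwise distinct elements of the ambient symmetric set. For the upper bound, I decompose $E(G)$ into $\lceil\Delta/2\rceil$ layers via Lemma~\ref{lem: layers}; each layer is a disjoint union of paths and circuits, and since $\sigma=\texttt{\bf 1}$ every such circuit is positive. By parts 1 and 2 of Lemma~\ref{lem: basics}, each layer admits an $S^0_2$-coloring. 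Assigning a fresh non-self-inverse pair $\pm s_i$ to the $i$-th layer produces an $S^0_{2\lceil\Delta/2\rceil}$-coloring of $(G,\texttt{\bf 1})$, since directed colors from distinct layers live in disjoint pairs and conflicts can thus only occur within a single layer. Hence $\chi'(G,\sigma)\le 2\lceil\Delta/2\rceil\le\Delta+1$.

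For the characterization I handle both directions. \emph{Sufficiency:} If $\Delta$ is even, $M=\emptyset$ already gives $\chi'\le\Delta$ by the argument above. If $\Delta$ is odd and $M$ is a matching with $\Delta(G-M)$ even, then $\Delta(G-M)\le\Delta-1$, so I can decompose $G-M$ into $\Delta(G-M)/2\le(\Delta-1)/2$ layers and assign a fresh non-self-inverse pair to each, while coloring every edge of $M$ with a single self-inverse color $0_1$. Validity follows because $M$ being a matching forces $0_1$ to appear at most once at any vertex, and distinct layers supply disjoint pairs that are also disjoint from $\{0_1\}$. This yields an $S^1_{\Delta-1}$-coloring, so $\chi'(G,\sigma)\le\Delta$. \emph{Necessity:} Assume $\chi'(G,\sigma)=\Delta$ with $\Delta$ odd. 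Because $\chi'_0$ is always even, we must have $\chi'_1(G,\sigma)=\Delta$, witnessed by an $S^1_{\Delta-1}$-coloring $c$. Let $M:=c^{-1}(0_1)$; this is a matching by Observation~\ref{obs:color half-edges}. At any $v\in V_{\Delta(G)}$ the $\Delta$ directed colors are $\Delta$ pairwise distinct elements of $S^1_{\Delta-1}$, a set of size exactly $\Delta$, so every element appears at $v$; in particular $0_1$ does, meaning $v$ is $M$-covered. Consequently $d_{G-M}(v)=\Delta-1$ for $v\in V_{\Delta(G)}$ and $d_{G-M}(u)\le\Delta-1$ otherwise, giving $\Delta(G-M)=\Delta-1$, which is even.

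Sharpness is witnessed by $(G,\texttt{\bf 1})$ for any 3-regular graph $G$ without a perfect matching; such graphs exist, e.g.\ one formed by attaching three odd-order cubic blocks at a common cut vertex so that the Tutte condition fails. Then $\Delta=3$ is odd, $V_\Delta=V(G)$, no matching saturates $V(G)$, and the characterization forces $\chi'=4=\Delta+1$. The main obstacle I expect is the necessity argument, which hinges on the pigeonhole observation $|S^1_{\Delta-1}|=\Delta$: this forces the self-inverse color $0_1$ to appear at every maximum-degree vertex, and then $M:=c^{-1}(0_1)$ is the required matching essentially for free.
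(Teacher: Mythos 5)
Your proposal is correct and follows essentially the same route as the paper: the layer decomposition of Lemma~\ref{lem: layers} combined with the $S^0_2$-colorability of balanced layers for the upper bound, the extension by a single self-inverse color on $M$ for sufficiency, and the pigeonhole argument that $c^{-1}(0_1)$ must cover every maximum-degree vertex for necessity. Your sharpness example (odd-regular graphs without a perfect matching) is also the paper's, so there is nothing further to add.
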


\begin{proof} 
	By Lemma \ref{lem: layers}, $(G,\sigma)$ 
	decomposes into $k = \lceil \frac{\Delta(G)}{2} \rceil$ layers. 
	Each layer is balanced and therefore, 
$S^0_2$-colorable by Lemma \ref{lem: basics}. The
$S^0_2$-colorings of the $k$ layers easily combine to an $S^0_{2k}$-coloring of 
$(G,\sigma)$ and the first statement is proved.

If $G$ has a matching $M$ such that $\Delta(G-M)$ is even, say $\Delta(G-M)=2t$, 
then it follows as above that $(G-M, \sigma|_{E(G-M)})$ has 
an $S^0_{2t}$-coloring $c$. If $M \not = \emptyset$, then $c$ can be extended to an $S^1_{2t}$-coloring of $(G,\sigma)$ by coloring the elements of $M$ with color $0$.

If $\chi'(G,\sigma) = \Delta(G)$, then we consider first the case when
$\Delta(G)=2k+1$. Thus, 
$(G,\sigma)$ has an $S^1_{2k}$-coloring $c$ and therefore, $c^{-1}(0)$ is a matching 
in $G$ that covers every vertex of maximum degree. Thus,
$\Delta(G-c^{-1}(0)) = \Delta(G)-1$ is even and the statement is proved. The case $\Delta(G)$ even is trivial.

Let $r > 0$ be an odd integer. Then $\chi'(G,\texttt{\bf 1}) = r+1$, 
for every $r$-regular graph $G$, which does not have a 1-factor. 
\end{proof}

Next we will prove a Shannon-type theorem for signed graphs. 

\begin{theorem} \label{thm: Shannon}
	For each signed graph $(G,\sigma): \chi'(G,\sigma) \leq \lfloor \frac{3}{2} \Delta(G) \rfloor$.
\end{theorem}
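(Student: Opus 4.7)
My plan is to follow the layer-decomposition approach used in the proof of Theorem~\ref{thm: Koenig} and already envisioned in the paragraph preceding it. Set $\Delta=\Delta(G)$, $k=\lceil\Delta/2\rceil$, and decompose $E(G)$ into $k$ layers $L_1,\dots,L_k$ by Lemma~\ref{lem: layers}, each a spanning subgraph of maximum degree at most $2$. Each signed layer $(L_i,\sigma|_{L_i})$ is a disjoint union of paths and circuits and, by Lemma~\ref{lem: basics}, admits an $S^1_2$-coloring with at most three colors, in which the self-inverse color $0$ is used only on one distinguished negative edge of each negative circuit of the layer.

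The real work is to combine the per-layer colorings into a single coloring that lives in a symmetric set $S^0_{2m}$ or $S^1_{2m}$, since the naive assignment of disjoint color triples gives $3k$ colors but introduces one self-inverse color per unbalanced layer, an object not of the form required by $\chi'$. My main idea is to use a single shared self-inverse color $0$ across all layers: in each unbalanced layer one picks one distinguished negative edge per negative circuit, and the distinguished edges over all layers are required to form a matching of $G$. When such a choice exists, one obtains an $S^1_{2k}$-coloring with $2k+1\le\lfloor 3\Delta/2\rfloor$ colors, well within the target.

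When no global matching of distinguished negative edges can be arranged---as happens in the Shannon multigraph on three vertices---the plan is to re-color the obstructing negative circuits from an $S^1_2$-pattern into an $S^0_4$-pattern (which is always possible for a negative circuit, by using two distinct non-self-inverse pairs $\{s,-s\}$ and $\{s',-s'\}$) and to share the extra non-self-inverse pair between two layers in the style of the $S^0_6$-coloring of the Shannon multigraph, where parallel edges in different layers use the same symmetric pair with opposite signs. For the odd case $\Delta=2k+1$, one of the $k+1$ layers is selected to be a matching $M$, which is colored with the single self-inverse color $0$; the remaining $k$ layers contribute at most $3k$ non-self-inverse colors, giving $3k+1=\lfloor 3\Delta/2\rfloor$ in total.

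The hard part is the combining step: showing that the distinguished negative edges across all unbalanced layers can either be chosen as a matching of $G$ or else, when this fails, that the number of extra non-self-inverse pairs forced by the failure is small enough to keep the total color count at most $\lfloor 3\Delta/2\rfloor$. This global combinatorial accounting is the technical core of the theorem and does not reduce to a statement about a single layer.
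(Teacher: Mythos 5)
Your starting point---decomposing $E(G)$ into $\lceil\Delta/2\rceil$ layers and giving each layer an $S^1_2$-coloring whose self-inverse color lands only on one chosen negative edge of each negative circuit---is exactly the paper's, and you have correctly located the difficulty in the combining step. But the proposal stops precisely there: the ``global combinatorial accounting'' you defer is the entire content of the theorem, and the specific plan you sketch does not work as stated. Requiring the distinguished negative edges of \emph{all} layers to form a single matching of $G$ fails already for the all-negative fat triangle with $\Delta=2t$, $t\ge 2$: every layer is an odd, hence negative, circuit on the same three vertices, so the $t$ distinguished edges can never form a matching (consistent with the fact that this graph needs $3t$ colors, not $2t+1$). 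Your fallback (recoloring obstructing circuits with an $S^0_4$-pattern and ``sharing'' the extra pair between two layers) comes with no argument that the shared pair avoids conflicts at common vertices, nor that the number of extra pairs stays within budget. In the odd case your plan presupposes a decomposition into $k$ layers plus a matching covering all maximum-degree vertices, which by Theorem~\ref{thm: Koenig} is exactly the condition that can fail; and even when it holds, each of the remaining $k$ layers still carries its own self-inverse color, so you are back to the original problem.

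The paper resolves the combining step by a mechanism your proposal misses. First one passes to an equivalent signature $\sigma'$ (resigning along a matching or a forest) so that in every layer each edge receiving a self-inverse color is \emph{negative}; Claims~2 and~3 of the paper do the matching work (Hall's theorem, a maximum matching on $V_{\Delta}$, redistributing matching edges into the layers) needed to arrange this when $\Delta$ is odd. This produces an $S^t_{2t}$- or $S^{t+1}_{2t}$-coloring---one self-inverse color per layer, which is not yet of the form required by $\chi'$---with every self-inverse class contained in $N_{\sigma'}$. The punchline is then local and clean: each self-inverse class is a matching, so the union of two of them is a disjoint union of paths and even circuits; since all these edges are negative, the even circuits are balanced, hence by Lemma~\ref{lem: basics} each such union can be recolored with a single new non-self-inverse pair $\pm a$. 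Pairing up the self-inverse classes converts the coloring into a legal $S^0_{2m}$- or $S^1_{2m}$-coloring with at most $\lfloor\frac{3}{2}\Delta(G)\rfloor$ colors. Without this resigning-plus-merging idea, or a fully worked-out substitute for it, your argument has a genuine gap.
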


\begin{proof} Let $(G,\sigma)$ be a signed graph. 
	By Lemma \ref{lem: basics}, a layer has an $S^1_2$-coloring that colors precisely one negative edge in each negative circuit (if there is any) with the self-inverse color and all other edges with the non-self-inverse color. We assume that a layer $L_i$ has such a coloring $c_i$ which uses colors $0_i$ and $\pm s_i$.
	
	\begin{claim}{1:} 
	If $\Delta(G) = 2t$, then $(G,\sigma)$ has an $S^t_{2t}$-coloring $c$
	with $\bigcup_{i=1}^t c^{-1}(0_i) \subseteq N_{\sigma}$.
	\end{claim}

\begin{claimproof} 
By Lemma \ref{lem: layers}, $E(G)$ decomposes into $t$ layers $L_1, \dots, L_t$.
By our assumptions,
layer $L_i$ has an $S^1_{2}$-coloring $c_i$ which uses the colors $0_i$ and $\pm s_i$
and each edge colored with $0_i$ is negative.  Now, 
the colorings $c_1, \dots, c_t$ 
easily combine to an $S^{t}_{2t}$-coloring $c$ of $(G,\sigma)$ with 
$\bigcup_{i=1}^t c^{-1}(0_i) \subseteq N_{\sigma}$. 
\end{claimproof}

\begin{claim}{2:} If $\Delta(G)=2t+1$ and $V_{\Delta}$ is an independent set, then there is an equivalent signature $\sigma '$  such that $(G,\sigma')$ has an $S_{2t}^t$-coloring $c$ with $\bigcup_{i=1}^t c^{-1}(0_i) \subseteq N_{\sigma'}$.
\end{claim}

\begin{claimproof}
	By Hall's Theorem, there is a matching $M$ of $m$ edges $e_1, \ldots, e_m$ that covers all vertices of $V_{\Delta}=\{x_1, \ldots x_m\}$. Let $e_j=x_jy_j$, then $d_G(y_j)\leq 2t$. Since $M$ is a matching, there is a signature $\sigma'$ equivalent to $\sigma$ such that $M \subseteq N_{\sigma'}$. By Lemma~\ref{lem: layers}, $G-M$ decomposes into $t$ layers $L_1, \ldots, L_t$. For each $j \in \{1, \ldots ,m\}$, add $e_j$ to a layer $L \in \{L_1, \ldots, L_t\}$ such that $y_j$ is incident to at most one half-edge of $L$, which is possible since $d_G(y_j) \leq 2t$. We obtain a decomposition of $G$ into $t$ sub-cubic graphs $L_1', \ldots L_t'$. We claim that for each of these subgraphs there is an $S_{2}^1$-coloring in which every edge colored with the self-inverse color belongs to $N_{\sigma'}$.
	
	For $i \in \{1,\ldots,t\}$ let $L_i'=L_i+M_i$, i.e. $M_i \subseteq M$ is the set of edges added to $L_i$.

By our assumption, there is an $S_2^1$-coloring $c_i$ of $(L_i,\sigma' |_{E(L_i)})$ that colors precisely one negative edge with color $0_i$ in each negative circuit and all other edges with $\pm s_i$. Note that, if $x_jy_j \in M_i$, then $y_j$ misses color $0_i$ and at least one of $\pm s_i$. Furthermore, since $V_{\Delta}$ is an independent set, every edge colored with $0_i$ is adjacent to at most one edge of $M_i$. Thus, by resigning colors $s_i$ and $-s_i$ at some unbalanced circuits of $L_i$ if necessary, we can transform $c_i$ to an $S_2^1$-coloring $c_i'$ of $(L_i,\sigma' |_{E(L_i)})$ such that
	\begin{itemize}
		\item[1.] $(c_i')^{-1}(0_i) \subseteq N_{\sigma'}$,
		\item[2.] if $x_jy_j \in M_i$ is incident with an edge colored with $0_i$, then $x_j$ and $y_j$ either both miss color $s_i$ or both miss color $-s_i$.
	\end{itemize}
	Now, $c_i'$ can be extended to an $S_2^1$-coloring of $(L_i',\sigma' |_{E(L_i')})$ by coloring every edge of $M_i$ adjacent to a $0_i$-colored edge with either $s_i$ or $-s_i$ and all other edges of $M_i$ with $0_i$.
	
	Thus, for every $i \in \{1, \ldots, t\}$ there is an $S_{2}^1$-coloring of $L_i'$ in which every edge colored with the self-inverse color is negative. These colorings easily combine to an $S_{2t}^t$-coloring $c$ of $(G,\sigma')$ with $\bigcup_{i=1}^t c^{-1}(0_i) \subseteq N_{\sigma'}$.
\end{claimproof}

\begin{claim}{3:} 
		If $\Delta(G) = 2t+1$ and $V_{\Delta}$ is not an independent set, then there is an equivalent signature $\sigma'$ such that $(G,\sigma')$ has an $S^{t+1}_{2t}$-coloring $c$ with $\bigcup_{i=1}^{t+1} c^{-1}(0_i) \subseteq N_{\sigma'}$.
\end{claim}
\begin{claimproof}
Case 1: $G$ has a matching $M$ that covers all vertices of degree $2t+1$. Since $M$ is a matching, there is an equivalent signature $\sigma'$ with $M \subseteq N_{\sigma'}$. By Claim 1, $(G-M,\sigma'|_{E(G)-M})$ has an $S^t_{2t}$-coloring $c$ with $\bigcup_{i=1}^t c^{-1}(0_i) \subseteq N_{\sigma'}$. Coloring the edges of $M$ with a new self-inverse color $0_{t+1}$ gives the desired coloring.

Case 2: $G$ does not have a matching that covers all vertices of degree $2t+1$. 
Let $M_1$ be a matching of $G$ that covers the maximum number of vertices of degree $2t+1$ in $G$. Thus, the vertices of degree $2t+1$ form an independent set in $H_1=G-M_1$. Let $M_2$ be a matching in $H_1$ that covers all vertices of degree $2t+1$ in $H_1$. The components of $G[M_1 \cup M_2]$ are isomorphic to $K_2$ or $K_{1,2}$. Thus, there is an equivalent signature $\sigma'$ with $M_1 \cup M_2 \subseteq N_{\sigma'}$. Since all edges of $M_2$ are negative, there is an $S^{t}_{2t}$-coloring $c$ of $(H_1,\sigma'|_{E(H_1)})$ with $\bigcup_{i=1}^t c^{-1}(0_i) \subseteq N_{\sigma'}$ by the same argumentation as in the proof of Claim 2. Coloring the edges of $M_1$ with a new self-inverse color $0_{t+1}$ gives the desired coloring.  
\end{claimproof}	

Claims 1-3 show that in any case,  there is an equivalent signature $\sigma'$ such that $(G,\sigma')$ has an
$S^t_{2t}$-coloring or an $S^{t+1}_{2t}$-coloring (only if $\Delta = 2t+1$) 
where every edge
that is colored by a self-inverse color is negative. 

Note that every self-inverse color class is a matching and therefore,
the union of two self-inverse color classes consists of paths and even circuits. 
Since all these edges are negative, the circuits are balanced. Thus, the union of two 
self-inverse color classes can be colored with two (new) non-self-inverse colors 
$\pm a$. 

By recoloring pairs of self-inverse color classes 
of the above colorings we obtain the appropriate 
colorings of $(G,\sigma')$. If $t$ is odd, then an $S^t_{2t}$-coloring
transforms to an $S^1_{3t-1}$-coloring and an $S^{t+1}_{2t}$-coloring to an 
$S^0_{3t+1}$-coloring. 
If $t$ is even, then an $S^t_{2t}$-coloring
transforms to an $S^0_{3t}$-coloring and an $S^{t+1}_{2t}$ to an 
$S^1_{3t}$-coloring. 

Thus, if $\Delta(G) = 2t$, then 
$\chi'(G,\sigma') \leq 3t \leq \lfloor \frac{3}{2} \Delta(G) \rfloor$, and
if $\Delta(G) = 2t+1$, then 
$\chi'(G,\sigma') \leq 3t+1 \leq \lfloor \frac{3}{2} \Delta(G) \rfloor$. Since $\sigma$ and $\sigma'$ are equivalent, Theorem \ref{thm: Shannon}
is proved. 
\end{proof}	 

The bound of Theorem \ref{thm: Shannon} is sharp. Since 
$\chi'(G,\texttt{\bf -1}) = \chi'(G)$, the bound is attained by 
the fat triangles, where any two vertices are connected by the same number of 
parallel edges. 


\end{document}